\newtheorem{theorem}{Theorem}
\newtheorem{lemma}[theorem]{Lemma}
\newtheorem{proposition}[theorem]{Proposition}
\newenvironment{proof}[1][Proof]{\noindent\textbf{#1.} }{\ \rule{0.5em}{0.5em}}
\begin{document}

\title{Minimal Generators of Annihilators of Neat Even Elements in the
Exterior Algebra}
\author{Song\"{u}l Esin \\
19 May\i s mah. Tuccarbasi sok. No:10A/25 Istanbul, Turkey\\
songulesin@gmail.com$\medskip $\\
Dedicated to the memory of Professor Cemal Ko\c{c}}
\maketitle

\begin{abstract}
In this paper we exhibit a minimal set of generators form the annihilator of
even neat elements of the exterior algebra of a vector space, when the base
field is of positive characteristic and thus we prove the conjecture we
established in [3]. In order to do that we heavily use the results obtained
in [1] and [2]. This also allows us to exhibit a vector space basis for the
annihilators under consideration.
\end{abstract}

\footnotetext{%
2010 \textit{Mathematics Subject Classification}: 15A75 \newline
\textit{Key words and phrases:} Exterior algebra, Frobenius algebra,
Symmetric algebra, Minimal generators, Annihilator.}

\section{Introduction}

Let $V$ be a finite dimensional vector space over a field $F$, and let $E(V)$
be the exterior algebra on $V$. An element $\xi \in E(V)$ is called a
decomposable $m-$vector if $\xi =x_{1}\wedge x_{2}\wedge \cdots \wedge x_{m}$
for some $x_{1},x_{2},\cdots ,x_{m}\in V.$ A sum $\mu =\xi _{1}+\xi
_{2}+\cdots +\xi _{s}$ of decomposable elements of $E(V)$ is said to be 
\textbf{neat} if $\xi _{1}\wedge \xi _{2}\wedge \cdots \wedge \xi _{s}\neq
0. $ This amounts to say that if we let $M_{k}=\{x_{k1},x_{k2},\cdots
,x_{kn_{k}}\}$ the set of factors of $\xi _{k}=x_{k1}\wedge x_{k2}\wedge
\cdots \wedge x_{kn_{k}}$ for each $k\in \{1,2,\ldots ,s\},$ then $%
M=\tbigcup\limits_{k=1}^{s}M_{k}$ is linearly independent.

In \cite{KE}, we proved that when $F$ is a field of characteristic zero the
annihilator of even neat element $\mu $, i.e, $n_{1},n_{2},\ldots ,n_{s}$
are all even,%
\begin{equation*}
(\xi _{i_{1}}-\xi _{j_{1}})\cdots (\xi _{i_{r}}-\xi _{j_{r}})u_{k_{1}\cdots
}u_{k_{t}}
\end{equation*}%
where\ $u_{k_{l}}\in M_{k_{l}}$ and $\{i_{1},\ldots ,i_{r};j_{1},\ldots
,j_{r};k_{1},\ldots ,k_{t}\}=\{1,2,\ldots ,s\}$ with $2r+t=s$. There we also
conjectured that this result is true for any base field of characteristic $\
p>\frac{s+1}{2}.$

\bigskip

The aim of this paper is three-fold: For an even neat element $\mu $

\begin{description}
\item[ \ (i)] to prove this conjecture,

\item[ (ii)] to determine minimal generators of the annihilator of $\mu $
for all characteristics,

\item[(iii)] to describe the vector space structure of both the principal
ideal $(\mu )$ and its annihilator $Ann(\mu )$ in $E(V)$ by using
stack-sortable polynomials introduced in \cite{KE} and the results of \cite%
{KGE}$.$
\end{description}

\bigskip

To begin with we give the notations that will be used throughout the text.

\bigskip

\textbf{Notations:}$\medskip $

$S=\{1,\ldots ,s\}\medskip $

$\Gamma :$ the ideal of the polynomial ring $F[x_{1},\ldots ,x_{s}]$
generated by $x_{1}^{2},\ldots ,x_{s}^{2}.\medskip $

$A=F[x_{1},\ldots ,x_{s}]/\Gamma =F[\xi _{1},\ldots ,\xi _{s}]$ \ with $\
\xi _{i}=x_{i}+\Gamma .\medskip $

$A_{k}:$~the homogeneous component of $A$ consisting of homogeneous
ele\medskip \linebreak \medskip\ \ \ \qquad\ ments of degree $k.\medskip $

$\mu =\xi _{1}+\cdots +\xi _{s}$ $\medskip $

$\mathcal{M}_{t}:$~the set of $t$-$th$ degree monomials $M_{K}=\xi
_{k_{1}}\cdots \xi _{k_{t}}$ where\medskip \linebreak \medskip $\qquad
\qquad K=\{k_{1},\ldots ,k_{t}\}\subset \{1,\ldots ,s\}.\medskip $

$\mathcal{G}_{k}:$ the set of elements of the form 
\begin{equation*}
\gamma _{I,J}=(\xi _{i_{1}}-\xi _{j_{1}})(\xi _{i_{2}}-\xi _{j_{2}})\cdots
(\xi _{i_{k}}-\xi _{j_{k}})
\end{equation*}

\qquad where $I=\{i_{1},\ldots ,i_{k}\}$and $J=\{j_{1},\ldots ,j_{k}\}$ are
disjoint subsets of $S.\medskip $

$\mathcal{G}_{S}:$ the set of all products of the form $M_{K}\gamma _{I,J}$
with $I\cup J\cup K=S$ and \smallskip \newline

\qquad $|I|+|J|+|K|=|S|.\medskip $

$\mathcal{P}_{S}:$ the set of stack-sortable polynomials%
\begin{equation*}
\text{ \ \ \ \ }(\zeta _{\sigma (1)}-\eta _{1})\cdots (\zeta _{\sigma
(d)}-\eta _{d})\text{ with }\sigma \text{ is a stack-sortable permutation,}
\end{equation*}

\qquad where $\eta _{k}=\xi _{2k},~\zeta _{k}=\xi _{2k-1}$ \ and \ $\eta
_{d}=0$ \ when $s$ is odd, \cite{KGE}.

\section{\textbf{Frobenius Algebra Structure of Multilinear Polynomials}}

Recall that a finite dimensional algebra is called a \textit{Frobenius
algebra} if there is a \textit{nondegenerate bilinear form} $B$ satisfying
the associativity condition\ $B(ab,c)=B(a,bc)$ for all elements of the
algebra. Further if $B$ is also symmetric the algebra is called a \textit{%
symmetric algebra. }The exterior algebra is an important example of
Frobenius algebras \cite{KE}.\textit{\ }First of all we note that $A$ is a
symmetric algebra.

\begin{lemma}
\label{Frobenius}$A=F[\xi _{1},\ldots ,\xi _{s}]$ is a symmetric algebra,
i.e. it has a nondegenerate symmetric bilinear form 
\begin{equation*}
B:A\times A\rightarrow F\text{ \ \ such that }B(ab,c)=B(a,bc)
\end{equation*}
\end{lemma}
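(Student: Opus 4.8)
The plan is to construct the form $B$ explicitly from the top homogeneous piece of $A$. First I recall that $A$ has the $F$-basis of squarefree monomials $\xi_K := \prod_{i \in K}\xi_i$, $K \subseteq S$ (with $\xi_\emptyset = 1$), and that the top component $A_s$ is one-dimensional, spanned by $\omega := \xi_1\xi_2\cdots\xi_s$. Define a linear functional $\lambda\colon A \to F$ by declaring $\lambda(\omega)=1$ and $\lambda(\xi_K)=0$ for all $K \subsetneq S$ — that is, $\lambda$ extracts the coefficient of $\omega$ — and set $B(a,b):=\lambda(ab)$.

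Associativity of $B$ is then automatic from associativity of multiplication in $A$: $B(ab,c)=\lambda((ab)c)=\lambda(a(bc))=B(a,bc)$. Symmetry is automatic from commutativity of $A$ (it is a quotient of a polynomial ring), since then $\lambda(ab)=\lambda(ba)$. So the only thing left to check is nondegeneracy.

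For nondegeneracy, take $a=\sum_{K\subseteq S}c_K\,\xi_K\neq 0$, fix $K_0$ with $c_{K_0}\neq 0$, and pair $a$ against $\xi_{S\setminus K_0}$. For a basis monomial $\xi_L$: if $L\not\subseteq K_0$ then $\xi_L\,\xi_{S\setminus K_0}$ contains a repeated variable and hence vanishes modulo $\Gamma$; if $L\subseteq K_0$ then $\xi_L\,\xi_{S\setminus K_0}=\xi_{L\cup(S\setminus K_0)}$ has degree $|L|+(s-|K_0|)\le s$, with equality — the product then being $\omega$ itself — exactly when $L=K_0$. Therefore $\lambda(\xi_L\,\xi_{S\setminus K_0})=\delta_{L,K_0}$, so $B(a,\xi_{S\setminus K_0})=c_{K_0}\neq 0$. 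Hence the left radical of $B$ is trivial, and by symmetry so is the right radical; thus $B$ is a nondegenerate symmetric associative bilinear form and $A$ is a symmetric algebra.

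I do not expect a real obstacle here: the whole point is that the socle $A_s$ is one-dimensional and that the complementary-subset pairing $\xi_K\otimes\xi_{S\setminus K}\mapsto 1$ is perfect. The only care needed is to note that $A$ is a genuinely commutative algebra (the defining relations are $x_i^2=0$, not anticommutation), so no signs intervene and the argument is uniform in the characteristic of $F$.
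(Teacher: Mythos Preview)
Your proof is correct and is essentially the same as the paper's: both define $B(a,b)$ as the coefficient of $\xi_1\cdots\xi_s$ in $ab$, and your nondegeneracy computation $\lambda(\xi_L\,\xi_{S\setminus K_0})=\delta_{L,K_0}$ is exactly the paper's observation that the Gram matrix of $B$ on the monomial basis is a permutation matrix. The paper is terser, but the content is identical.
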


\begin{proof}
Let $\phi $ be the linear form on $A$ sending each $a\in A$ to its leading
coefficient i.e. the coefficient of the monomial $\xi _{1}\ldots \xi _{s}$
in the expression for $a.$ Then%
\begin{equation*}
B(a,b)=\phi (ab)
\end{equation*}%
provides a bilinear form whose matrix relative to the standard basis of
monomials is a permutation matrix since%
\begin{equation*}
B(\xi _{i_{1}}\cdots \xi _{i_{k}},\xi _{j_{1}}\cdots \xi _{j_{l}})=\left\{ 
\begin{array}{ccc}
1 & \text{if} & \{i_{1}\cdots i_{k,}j_{1}\cdots j_{l}\}=S \\ 
0 & \text{ } & \text{otherwise}%
\end{array}%
\right. .
\end{equation*}%
The last part follows at once from the associativity in $A.$
\end{proof}

In the subsequent sections we will see that $A$ can be regarded as a
subalgebra of an exterior algebra $E$ which inherits its Frobenius property
and we will determine generators of $Ann_{E}(\mu )$ by using $Ann_{A}(\mu )$.

\section{Annihilators of Principal Ideals of the Exterior Algebra}

In this section we will describe generators of $Ann_{A}(\mu )$ under the
assumption that $Char(F)>\frac{s}{2}.$ Also, we note that for any even
elements $\xi _{i}$\ and $\xi _{j},$\ we have $\xi _{i}\xi _{j}=\xi _{j}\xi
_{i}.$ First we give the following lemma that use in several proofs in this
paper.

\begin{lemma}
\label{Obvious Lemma}Let $\eta \in \mathcal{G}_{S}.$ Then $\mu \eta =0.$
\end{lemma}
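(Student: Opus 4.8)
The plan is to use the two structural features of $A$: it is commutative (as noted just before the statement, even elements commute) and each generator is square-zero, $\xi _{i}^{2}=0$. Fix $\eta =M_{K}\gamma _{I,J}\in \mathcal{G}_{S}$. The condition $|I|+|J|+|K|=|S|$ together with $I\cup J\cup K=S$ forces $I,J,K$ to be pairwise disjoint, hence to partition $S$; in particular $|I|=|J|$, say $I=\{i_{1},\ldots ,i_{|I|}\}$ and $J=\{j_{1},\ldots ,j_{|I|}\}$ with $\gamma _{I,J}=\prod_{l=1}^{|I|}(\xi _{i_{l}}-\xi _{j_{l}})$. Splitting $\mu =\sum_{n\in S}\xi _{n}$ along this partition, $\mu =\sum_{n\in K}\xi _{n}+\sum_{l=1}^{|I|}(\xi _{i_{l}}+\xi _{j_{l}})$.

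First I would kill the $K$-part: for $n\in K$ the monomial $M_{K}$ already has $\xi _{n}$ as a factor, so $\xi _{n}M_{K}$ contains a repeated factor and vanishes; thus $\big(\sum_{n\in K}\xi _{n}\big)M_{K}\gamma _{I,J}=0$, and by commutativity $\mu \eta =M_{K}\big(\sum_{l=1}^{|I|}(\xi _{i_{l}}+\xi _{j_{l}})\big)\gamma _{I,J}$. Then for each $l$ I would pull the factor $(\xi _{i_{l}}-\xi _{j_{l}})$ of $\gamma _{I,J}$ to the front (again by commutativity) and use $(\xi _{i_{l}}+\xi _{j_{l}})(\xi _{i_{l}}-\xi _{j_{l}})=\xi _{i_{l}}^{2}-\xi _{j_{l}}^{2}=0$, the cross terms cancelling since the generators commute. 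Hence $(\xi _{i_{l}}+\xi _{j_{l}})\gamma _{I,J}=0$ for every $l$, and summing over $l$ yields $\mu \eta =0$.

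There is no real obstacle here: the argument is a direct computation. The only place that needs a moment's attention is the bookkeeping that $S=I\sqcup J\sqcup K$ and hence $|I|=|J|$, which is precisely what makes the pairing of the $\xi _{i_{l}}$ with the $\xi _{j_{l}}$ inside $\mu $ align with the binomial factors of $\gamma _{I,J}$; and the vanishing $\xi _{i_{l}}^{2}-\xi _{j_{l}}^{2}=0$ is the single step where both the square-zero relations and the commutativity of $A$ are used at once.
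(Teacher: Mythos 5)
Your proof is correct and is essentially the paper's own argument: split $\mu$ along the partition $S=I\sqcup J\sqcup K$, kill the $K$-terms via $\xi_{k}^{2}=0$, and annihilate each remaining pair with $(\xi_{i}+\xi_{j})(\xi_{i}-\xi_{j})=0$. The only cosmetic difference is that you record this last vanishing as $\xi_{i}^{2}-\xi_{j}^{2}=0$ while the paper records it as the cancellation of the cross terms $\xi_{i}\xi_{j}-\xi_{j}\xi_{i}=0$; both use the same two relations.
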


\begin{proof}
Since $\eta \in \mathcal{G}_{S},$ we write $\eta =\xi _{k_{1}}\cdots \xi
_{k_{t}}(\xi _{i_{1}}-\xi _{j_{1}})(\xi _{i_{2}}-\xi _{j_{2}})\cdots (\xi
_{i_{k}}-\xi _{j_{k}})$ where $\{i_{1},\ldots ,i_{k};j_{1},\ldots
,j_{k};k_{1},\ldots ,k_{t}\}=\{1,\ldots ,s\}=S.$ 
\begin{eqnarray*}
\mu \eta &=&(\xi _{1}+\cdots +\xi _{s})\xi _{k_{1}}\cdots \xi _{k_{t}}(\xi
_{i_{1}}-\xi _{j_{1}})(\xi _{i_{2}}-\xi _{j_{2}})\cdots (\xi _{i_{k}}-\xi
_{j_{k}}) \\
&=&((\xi _{i_{1}}+\xi _{j_{1}})+\cdots +(\xi _{i_{k}}+\xi _{j_{k}}))(\xi
_{i_{1}}-\xi _{j_{1}})(\xi _{i_{2}}-\xi _{j_{2}})\cdots (\xi _{i_{k}}-\xi
_{j_{k}})
\end{eqnarray*}%
\begin{equation*}
\text{ \ \ \ \ \ \ \ \ \ \ \ \ \ \ \ \ \ \ \ \ \ \ \ \ \ \ \ \ \ \ \ \ \ \ \
\ \ since \ }\xi _{k_{r}}\xi _{k_{r}}=0\text{ \ for all }r=1,2,\ldots t.
\end{equation*}%
Also, for all $r=1,2,\ldots k,$ $(\xi _{i_{r}}+\xi _{j_{r}})(\xi
_{i_{r}}-\xi _{j_{r}})=\xi _{i_{r}}\xi _{j_{r}}-\xi _{j_{r}}\xi _{i_{r}}=0$.
Then we get $\mu \eta =0.$
\end{proof}

A straightforward calculation gives the following:

\begin{lemma}
\label{Lemma1}Let $\omega \in A_{k}$ be such that $\omega =\omega
_{0}+\omega _{1}\xi _{s}$ \ with $\omega _{0},\omega _{1}\in F[\xi
_{1},\ldots ,\xi _{s-1}]$ and%
\begin{equation*}
\omega _{0}=\omega _{0}^{\prime }(\xi _{1}+\cdots +\xi _{s-1})\text{ \ and \ 
}\omega _{1}-\omega _{0}^{\prime }=\omega _{1}^{\prime }(\xi _{1}+\cdots
+\xi _{s-1})
\end{equation*}%
then $\omega =\omega ^{\prime }\mu $ for some $\omega ^{\prime }\in F[\xi
_{1},\ldots ,\xi _{s}].$
\end{lemma}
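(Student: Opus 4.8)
The plan is to produce the multiplier $\omega'$ explicitly and then verify the identity $\omega'\mu=\omega$ by a direct expansion. Write $\nu=\xi_{1}+\cdots+\xi_{s-1}$, so that $\mu=\nu+\xi_{s}$, and recast the two hypotheses as $\omega_{0}=\omega_{0}'\nu$ and $\omega_{1}=\omega_{0}'+\omega_{1}'\nu$, where $\omega_{0}',\omega_{1}'\in F[\xi_{1},\ldots,\xi_{s-1}]$. The natural candidate is
\begin{equation*}
\omega'=\omega_{0}'+\omega_{1}'\xi_{s}\in F[\xi_{1},\ldots,\xi_{s}],
\end{equation*}
since it splits the two pieces of $\omega$ according to whether or not they carry the factor $\xi_{s}$.

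Next I would multiply out $\omega'\mu=(\omega_{0}'+\omega_{1}'\xi_{s})(\nu+\xi_{s})$. Using that $A$ is commutative (it is a quotient of a polynomial ring) and that $\xi_{s}^{2}=0$ in $A$ (because $x_{s}^{2}\in\Gamma$), the cross term $\omega_{1}'\xi_{s}^{2}$ vanishes and the expansion collapses to $\omega_{0}'\nu+\omega_{0}'\xi_{s}+\omega_{1}'\nu\xi_{s}$. Grouping the terms without $\xi_{s}$ and the terms carrying $\xi_{s}$ gives $\omega_{0}'\nu+(\omega_{0}'+\omega_{1}'\nu)\xi_{s}$, which by the recast hypotheses equals $\omega_{0}+\omega_{1}\xi_{s}=\omega$. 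This finishes the argument.

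There is essentially no obstacle here: once the correct $\omega'$ is guessed, the only facts used are the relation $\xi_{s}^{2}=0$ and commutativity in $A$, which is why the paper calls this ``a straightforward calculation''. The single point worth recording with care is that $\omega_{0}'$ and $\omega_{1}'$ may be taken in $F[\xi_{1},\ldots,\xi_{s-1}]$, so that the constructed $\omega'$ genuinely lies in $F[\xi_{1},\ldots,\xi_{s}]$; this is implicit in the way the hypotheses are stated.
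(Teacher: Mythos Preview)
Your proof is correct and is exactly the ``straightforward calculation'' the paper alludes to without writing out; the paper gives no proof of this lemma beyond that remark. Your explicit choice $\omega'=\omega_{0}'+\omega_{1}'\xi_{s}$ and the expansion using $\xi_{s}^{2}=0$ are precisely what is intended.
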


\begin{lemma}
\label{Lemma2}If $Char(F)>k>\frac{s}{2},$ then $\mathcal{M}_{k}\subset A\mu
. $
\end{lemma}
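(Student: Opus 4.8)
The plan is to recast the statement as the surjectivity of a single linear map, and then to prove that surjectivity by a Gram matrix computation. Since the squarefree monomials in $\mathcal{M}_{k}$ are precisely the standard $F$-basis of the homogeneous component $A_{k}$, the inclusion $\mathcal{M}_{k}\subset A\mu$ is equivalent to $A_{k}\subseteq A\mu$, hence to $A_{k}=\mu A_{k-1}$, hence to the surjectivity of multiplication by $\mu$, namely $\cdot\mu\colon A_{k-1}\to A_{k}$. Note that the hypothesis $k>\frac{s}{2}$, i.e.\ $2k\ge s+1$, is unavoidable here: otherwise $\dim A_{k-1}=\binom{s}{k-1}<\binom{s}{k}=\dim A_{k}$ and no such surjection can exist.

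Next I would express this map in the monomial bases. For a $(k-1)$-subset $L\subset S$ one has $\xi_{L}\mu=\sum_{j\notin L}\xi_{L\cup\{j\}}$, so, written in the basis $\mathcal{M}_{k}$ of $A_{k}$, the generators $\{\xi_{L}\mu:\ |L|=k-1\}$ of the image are exactly the rows of the inclusion matrix $D$ whose rows are indexed by the $(k-1)$-subsets of $S$, whose columns are indexed by the $k$-subsets of $S$, and whose $(L,J)$ entry is $1$ when $L\subset J$ and $0$ otherwise. Thus $\cdot\mu$ is surjective if and only if $D$ has full column rank $\binom{s}{k}$, and for that it suffices to show that the $\binom{s}{k}\times\binom{s}{k}$ Gram matrix $D^{T}D$ is invertible over $F$.

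The key computation is that $(D^{T}D)_{J,J'}=\binom{|J\cap J'|}{k-1}$, which is $k$ when $J=J'$, is $1$ when $|J\cap J'|=k-1$, and is $0$ in every other case; hence $D^{T}D=kI+A_{1}$, where $A_{1}$ is the adjacency matrix of the Johnson graph on the $k$-subsets of $S$. Because $k>\frac{s}{2}$, this graph has diameter $s-k$, so, by the classical spectrum of the Johnson scheme, the eigenvalues of $A_{1}$ are $(k-i)(s-k-i)-i$ for $i=0,1,\dots,s-k$; adding $k$ and factoring, the eigenvalues of $D^{T}D$ are exactly
\[
(k-i)(s-k-i+1),\qquad i=0,1,\dots,s-k.
\]
For each such $i$ one has $1\le 2k-s\le k-i\le k$ and $1\le s-k-i+1\le s-k+1\le k$, the last inequality again being $2k\ge s+1$; so both factors lie in $\{1,2,\dots,k\}$. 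Since $\mathrm{Char}(F)>k$, none of these integers vanishes in $F$, so $\det(D^{T}D)\ne 0$, $D$ is injective, $\cdot\mu$ is surjective, and therefore $\mathcal{M}_{k}\subset A_{k}=\mu A_{k-1}\subseteq A\mu$.

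The part I expect to carry the real weight is the surjectivity of the Lefschetz-type operator $\cdot\mu$; the device that makes it clean is the identity $D^{T}D=kI+A_{1}$ together with the factorization of the resulting eigenvalues, which makes transparent both that some restriction on the characteristic is needed and that $\mathrm{Char}(F)>k$ is exactly the right one (a prime $p\le k$ can divide one of the two factors and kill an eigenvalue). A more elementary-looking alternative, closer in spirit to Lemma \ref{Lemma1}, would be to induct on $s$: using the $S_{s}$-symmetry of $\mu$ one may assume the given monomial does not involve $\xi_{s}$, and then one feeds Lemma \ref{Lemma1} with data supplied by the inductive hypothesis in $s-1$ variables. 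The obstacle on that route is the boundary case $s=2k-1$, where $\dim A_{k-1}=\dim A_{k}$ leaves no dimensional slack and one must exploit the freedom in the choice of the auxiliary polynomial $\omega_{0}'$; the Gram matrix argument sidesteps this difficulty entirely.
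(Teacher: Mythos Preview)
Your proof is correct, but it takes a quite different and considerably heavier route than the paper's. The paper gives a two-line explicit construction: writing $a=\xi_{i_1}+\cdots+\xi_{i_k}$ and $b=\xi_{j_1}+\cdots+\xi_{j_{s-k}}$ for the complementary sum, one has $a^{k}=k!\,\xi_{i_1}\cdots\xi_{i_k}$ (since each $\xi_i^{2}=0$) and $b^{k}=0$ (since $s-k<k$), so
\[
k!\,\xi_{i_1}\cdots\xi_{i_k}=a^{k}-(-b)^{k}=(a-(-b))\beta=\mu\beta
\]
for an explicit $\beta\in A$, using only the elementary factorization of $x^{k}-y^{k}$. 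Dividing by $k!$ (legal precisely because $\mathrm{Char}(F)>k$) finishes the argument.

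Your approach instead reformulates the problem as full column rank of the inclusion matrix between $(k-1)$-subsets and $k$-subsets of $S$, and settles that via the spectrum of the Johnson scheme. This is valid --- the integer eigenvalues $(k-i)(s-k-i+1)$ control $\det(D^{T}D)$ over any field since $D^{T}D$ has integer entries --- and it has the pleasant feature of displaying exactly which primes can obstruct surjectivity; it also ties the lemma to the classical Gottlieb--Kantor--Wilson rank results for inclusion matrices. The cost is that you import a nontrivial piece of spectral combinatorics where the paper needs nothing beyond $x^{k}-y^{k}=(x-y)(x^{k-1}+\cdots+y^{k-1})$, and the paper's argument moreover hands you an explicit preimage $\tfrac{1}{k!}\beta$ of each monomial rather than a bare existence statement. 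The inductive alternative you sketch at the end is likewise not what the paper does.
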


\begin{proof}
Since $s<2k,$ it follows from $s-k<k$ that 
\begin{equation*}
(\xi _{j_{1}}+\cdots +\xi _{j_{s-k}})^{k}=0
\end{equation*}%
for any $J=\{j_{1},\ldots ,j_{s-k}\}\subset S.$ Therefore for each $%
I=\{i_{1},\ldots ,i_{k}\}$ by considering its complement $J=\{j_{1},\ldots
,j_{s-k}\}$ in $S$, we obtain 
\begin{eqnarray*}
\xi _{i_{1}}\cdots \xi _{i_{k}} &=&\frac{1}{k!}(\xi _{i_{1}}+\cdots +\xi
_{i_{k}})^{k} \\
&=&\frac{1}{k!}(\xi _{i_{1}}+\cdots +\xi _{i_{k}})^{k}-(-1)^{k}\frac{1}{k!}%
(\xi _{j_{1}}+\cdots +\xi _{j_{s-k}})^{k} \\
&=&\frac{1}{k!}(\xi _{i_{1}}+\cdots +\xi _{i_{k}}+\xi _{j_{1}}+\cdots +\xi
_{j_{s-k}})\beta \\
&=&\frac{1}{k!}(\xi _{1}+\cdots +\xi _{s})\beta
\end{eqnarray*}%
where $I\cup J\cup K=S$ \ for some $\beta \in A.$
\end{proof}

\begin{proposition}
\label{Prop1}If $\omega \in A_{k}$ annihilates all elements of the form 
\begin{equation*}
\xi _{k_{1}}\cdots \xi _{k_{t}}(\xi _{i_{1}}-\xi _{j_{1}})(\xi _{i_{2}}-\xi
_{j_{2}})\cdots (\xi _{i_{k}}-\xi _{j_{k}})\text{\ }
\end{equation*}%
then $\omega \in A_{k-1}\mu .$
\end{proposition}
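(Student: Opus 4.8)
The plan is to prove the statement by induction on $s$, with the key reduction being an analysis of how $\omega$ behaves after specialising the variable $\xi_s$. Write $\omega = \omega_0 + \omega_1\xi_s$ with $\omega_0,\omega_1 \in F[\xi_1,\ldots,\xi_{s-1}]$, so that $\omega_0 \in A_k$ and $\omega_1 \in A_{k-1}$ of the subalgebra $A' = F[\xi_1,\ldots,\xi_{s-1}]$. The idea is that the hypothesis — $\omega$ kills every product $M_K\gamma_{I,J}$ with $I\cup J\cup K = S$, $|I|+|J|+|K| = s$ — contains, as a subfamily, all the generators one needs to run the corresponding statement for $A'$ with $s-1$ in place of $s$. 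Concretely, I would split $\mathcal{G}_S$ according to the role played by the index $s$: either $s \in K$, or $s \in I$ (say $s = i_k$), or $s \in J$. In the first case the product has the form $\xi_s\cdot(\text{something in }\mathcal{G}_{S\setminus\{s\}})$; in the latter two, after using $(\xi_a-\xi_s) = (\xi_a - 0) - (-\xi_s)$-type manipulations, one also isolates the "$\xi_s$-free part" and the "coefficient of $\xi_s$".

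The key algebraic step is then the following bookkeeping. Because $\xi_s^2 = 0$, annihilation $\omega\cdot(M_K\gamma_{I,J}) = 0$ splits into a $\xi_s$-free equation and a coefficient-of-$\xi_s$ equation in $A'$. Choosing $\eta$ to run over products with $s\in K$ forces $\omega_0$ to annihilate all of $\mathcal{G}_{S\setminus\{s\}}$-type elements of degree $k$ in $A'$ — but $s-1 < 2k$ fails to hold in general, so instead I expect the right move is to feed the hypothesis into Lemma~\ref{Lemma1}: show that $\omega_0 = \omega_0'\mu'$ and $\omega_1 - \omega_0' = \omega_1'\mu'$ where $\mu' = \xi_1+\cdots+\xi_{s-1}$, which by Lemma~\ref{Lemma1} yields $\omega = \omega'\mu$ for the full $\mu = \mu' + \xi_s$. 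To get the two divisibility conditions of Lemma~\ref{Lemma1}, one applies the inductive hypothesis (the Proposition for $s-1$) twice: once to $\omega_0$ using the subfamily of $\eta$'s with $s\in I$ or $s\in J$ paired appropriately so that the $\xi_s$-free and $\xi_s$-coefficient parts reproduce exactly the defining family of generators for the $(s-1)$-problem, and once to the combination $\omega_1 - \omega_0'$.

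I would handle the base case $s$ small (or $k=1$, where $\mathcal{M}_1 \subset A\mu$ is immediate and a degree-one $\omega$ annihilating all $\xi_i - \xi_j$ must be a scalar multiple of $\mu$) directly, and note that throughout one needs $Char(F) > \frac{s}{2}$ so that Lemma~\ref{Lemma2} is available to convert monomials into multiples of $\mu$ when the complement-raising-to-a-power trick is invoked; indeed Lemma~\ref{Lemma2} together with Lemma~\ref{Obvious Lemma} is what guarantees that the "extra" generators appearing after specialisation are themselves in $A\mu$, so that no obstruction to lifting arises.

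The main obstacle I anticipate is the combinatorial matching in the inductive step: one must verify that, after writing out $\omega\cdot(M_K\gamma_{I,J}) = 0$ and extracting the $\xi_s$-components, the resulting conditions on $\omega_0$ and on $\omega_1 - \omega_0'$ are \emph{exactly} "annihilates all generators of the type in the statement, for $s-1$", with the correct pairing of the $\pm\xi_s$ from a factor $(\xi_{i_r} - \xi_s)$ against the $\xi_s$ already present in $\omega_1\xi_s$. Getting the signs and the index bookkeeping right — in particular checking that every generator of the $(s-1)$-family is hit, not merely some of them, and that the degree drops by exactly one as claimed — is the delicate part; the rest is the routine application of Lemmas~\ref{Obvious Lemma}, \ref{Lemma1}, and \ref{Lemma2}.
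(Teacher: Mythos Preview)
Your plan is essentially the paper's own proof: write $\omega=\omega_0+\omega_1\xi_s$, use the generators with $s\in K$ to force $\omega_0\in A'\mu'$ via the inductive hypothesis for $(s-1,k)$, then use generators with $\xi_s$ in a difference factor to force $\omega_1-\omega_0'\in A'\mu'$ via $(s-1,k-1)$, and conclude by Lemma~\ref{Lemma1}. The one point the paper makes explicit that you leave vague is the case split $s>2k$ versus $s=2k$: in the latter case $t=0$, so there are \emph{no} generators with $s\in K$ and the inductive hypothesis for $(s-1,k)$ is unavailable; here the paper simply invokes Lemma~\ref{Lemma2} directly (since $k>\tfrac{s-1}{2}$) to obtain $\omega_0\in A'\mu'$ --- this is exactly where your mention of Lemma~\ref{Lemma2} should be anchored.
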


\begin{proof}
We use induction on the pairs $(s,k)$ \ with $2k\leq s$ considered with
lexicographic ordering. The case $(s,1)$ is trivial. Suppose the assertion
is true for all $(s^{\prime },k^{\prime })<(s,k).$ Let%
\begin{equation*}
\omega \xi _{k_{1}}\cdots \xi _{k_{t}}(\xi _{i_{1}}-\xi _{j_{1}})(\xi
_{i_{2}}-\xi _{j_{2}})\cdots (\xi _{i_{k}}-\xi _{j_{k}})=0\text{ \ when }%
I\cup J\cup K=S.
\end{equation*}%
There are two cases to consider.\smallskip

\textbf{Case 1: }$s>2k.$ In this case by writing $\omega =\omega _{0}+\omega
_{1}\xi _{s}$ \ with $\omega _{0},\omega _{1}\in F[\xi _{1},\ldots ,\xi
_{s-1}]$ we obtain 
\begin{eqnarray*}
\omega \xi _{k_{1}}\cdots \xi _{k_{t-1}}\xi _{s}(\xi _{i_{1}}-\xi
_{j_{1}})(\xi _{i_{2}}-\xi _{j_{2}})\cdots (\xi _{i_{k}}-\xi _{j_{k}}) &=&0
\\
(\omega _{0}+\omega _{1}\xi _{s})\xi _{k_{1}}\cdots \xi _{k_{t-1}}\xi
_{s}(\xi _{i_{1}}-\xi _{j_{1}})(\xi _{i_{2}}-\xi _{j_{2}})\cdots (\xi
_{i_{k}}-\xi _{j_{k}}) &=&0 \\
\omega _{0}\xi _{k_{1}}\cdots \xi _{k_{t-1}}\xi _{s}(\xi _{i_{1}}-\xi
_{j_{1}})(\xi _{i_{2}}-\xi _{j_{2}})\cdots (\xi _{i_{k}}-\xi _{j_{k}}) &=&0
\\
\omega _{0}\xi _{k_{1}}\cdots \xi _{k_{t-1}}(\xi _{i_{1}}-\xi _{j_{1}})(\xi
_{i_{2}}-\xi _{j_{2}})\cdots (\xi _{i_{k}}-\xi _{j_{k}}) &=&0
\end{eqnarray*}%
when $I\cup J\cup K=S-\{s\}.$ By the induction hypothesis applied for the
pair $(s-1,k),$ this implies%
\begin{eqnarray*}
\omega _{0} &=&\alpha _{0}(\xi _{1}+\cdots +\xi _{s-1})\text{ \ where }%
\alpha _{0}\in F[\xi _{1},\ldots ,\xi _{s-1}] \\
\omega &=&\alpha _{0}(\xi _{1}+\cdots +\xi _{s-1})+\omega _{1}\xi _{s} \\
&=&\alpha _{0}(\xi _{1}+\cdots +\xi _{s})+(\omega _{1}-\alpha _{0})\xi _{s}
\\
&=&\alpha _{0}\mu +(\omega _{1}-\alpha _{0})\xi _{s}.
\end{eqnarray*}%
Consequently, it follows from 
\begin{eqnarray*}
\omega \xi _{k_{1}}\cdots \xi _{k_{t}}(\xi _{i_{1}}-\xi _{j_{1}})(\xi
_{i_{2}}-\xi _{j_{2}})\cdots (\xi _{i_{k}}-\xi _{j_{k}}) &=&0 \\
(\alpha _{0}\mu +(\omega _{1}-\alpha _{0})\xi _{s})\xi _{k_{1}}\cdots \xi
_{k_{t}}(\xi _{i_{1}}-\xi _{j_{1}})(\xi _{i_{2}}-\xi _{j_{2}})\cdots (\xi
_{i_{k}}-\xi _{j_{k}}) &=&0
\end{eqnarray*}%
when $I\cup J\cup K=S$ that 
\begin{equation*}
(\alpha _{0}\mu +(\omega _{1}-\alpha _{0})\xi _{s})\xi _{k_{1}}\cdots \xi
_{k_{t}}(\xi _{i_{1}}-\xi _{j_{1}})(\xi _{i_{2}}-\xi _{j_{2}})\cdots (\xi
_{i_{k-1}}-\xi _{j_{k-1}})(\xi _{i_{k}}-\xi _{s})=0
\end{equation*}%
when $I\cup J\cup K=S.$ Hence, by Lemma \ref{Obvious Lemma} we get 
\begin{eqnarray*}
(\omega _{1}-\alpha _{0})\xi _{s}\xi _{k_{1}}\cdots \xi _{k_{t}}(\xi
_{i_{1}}-\xi _{j_{1}})(\xi _{i_{2}}-\xi _{j_{2}})\cdots (\xi _{i_{k-1}}-\xi
_{j_{k-1}})\xi _{i_{k}} &=&0 \\
(\omega _{1}-\alpha _{0})\xi _{k_{1}}\cdots \xi _{k_{t}}(\xi _{i_{1}}-\xi
_{j_{1}})(\xi _{i_{2}}-\xi _{j_{2}})\cdots (\xi _{i_{k-1}}-\xi
_{j_{k-1}})\xi _{i_{k}} &=&0
\end{eqnarray*}%
when $I\cup J\cup K=S-\{s\},$ and the induction hypothesis yields%
\begin{equation*}
\omega _{1}-\alpha _{0}=\alpha _{1}(\xi _{1}+\cdots +\xi _{s-1})
\end{equation*}%
and hence $\omega =\omega ^{\prime }\mu $ for some $\omega ^{\prime }\in
F[\xi _{1},\ldots ,\xi _{s}]$ as asserted.\smallskip

\textbf{Case 2: } $s=2k.$ Again we write $\omega =\omega _{0}+\omega _{1}\xi
_{s}$ \ with $\omega _{0}$ and $\omega _{1\text{ }}$in $F[\xi _{1},\ldots
,\xi _{s-1}]$ of degrees $k$ and $k-1$ respectively. Since $s-1<2k,$ by
Lemma \ref{Lemma2}, $\omega _{0}=\alpha _{0}(\xi _{1}+\cdots +\xi _{s-1})$
and therefore%
\begin{eqnarray*}
\omega &=&\omega _{0}+\omega _{1}\xi _{s} \\
&=&\alpha _{0}\mu +(\omega _{1}-\alpha _{0})\xi _{s}.
\end{eqnarray*}%
with $\alpha _{0}$ and $\omega _{1}-\alpha _{0}$ of degree $k-1$ in $F[\xi
_{1},\ldots ,\xi _{s-1}]$ as above. Thus, we have%
\begin{eqnarray*}
(\omega _{1}-\alpha _{0})\xi _{s}(\xi _{i_{1}}-\xi _{j_{1}})(\xi
_{i_{2}}-\xi _{j_{2}})\cdots (\xi _{i_{k-1}}-\xi _{j_{k-1}})\xi _{i_{k}}\xi
_{k_{1}}\cdots \xi _{k_{t}} &=&0 \\
(\omega _{1}-\alpha _{0})(\xi _{i_{1}}-\xi _{j_{1}})(\xi _{i_{2}}-\xi
_{j_{2}})\cdots (\xi _{i_{k-1}}-\xi _{j_{k-1}})\xi _{i_{k}}\xi
_{k_{1}}\cdots \xi _{k_{t}} &=&0
\end{eqnarray*}%
and it yields that%
\begin{equation*}
\omega _{1}-\alpha _{0}=\beta _{0}(\xi _{1}+\cdots +\xi _{s-1})
\end{equation*}%
by induction applied to the pair $(s-1,k-1).$ By Lemma \ref{Lemma1}, $\omega
=\omega ^{\prime }\mu $ and the proof is completed.
\end{proof}

\bigskip Now we can establish the following theorem.

\begin{theorem}
\label{Theo 6}The notation being as above we have $Ann(\mathcal{G}_{S}%
\mathcal{)=}A\mu $ and hence%
\begin{equation*}
A\mathcal{G}_{S}=Ann(A\mu )\text{ \ and }\dim (A\mathcal{G}_{S})+\dim (A\mu
)=\dim (A)=2^{s}
\end{equation*}
\end{theorem}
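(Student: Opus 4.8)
The theorem splits naturally into two assertions: first, that $\mathrm{Ann}(\mathcal{G}_S) = A\mu$ (equivalently $A\mu = \{\omega \in A : \omega\eta = 0 \text{ for all } \eta \in \mathcal{G}_S\}$), and second, the dual statement $A\mathcal{G}_S = \mathrm{Ann}(A\mu)$ together with the dimension count. The plan is to prove the first equality directly and then derive the rest from the Frobenius/symmetric algebra structure established in Lemma \ref{Frobenius}.

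For the inclusion $A\mu \subseteq \mathrm{Ann}(\mathcal{G}_S)$: every generator $\eta \in \mathcal{G}_S$ has the form $M_K\gamma_{I,J}$ with $I\cup J\cup K = S$, so by Lemma \ref{Obvious Lemma} we have $\mu\eta = 0$, hence $(\alpha\mu)\eta = \alpha(\mu\eta) = 0$ for every $\alpha \in A$ (using commutativity of the even elements). For the reverse inclusion $\mathrm{Ann}(\mathcal{G}_S) \subseteq A\mu$: take $\omega \in \mathrm{Ann}(\mathcal{G}_S)$ and decompose it into homogeneous components $\omega = \sum_k \omega_k$ with $\omega_k \in A_k$; since $\mathcal{G}_S$ is spanned by homogeneous elements and $A\mu$ is a homogeneous ideal, it suffices to treat each $\omega_k$ separately. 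For a homogeneous $\omega_k$ with $2k \le s$, the hypothesis says $\omega_k$ annihilates in particular all products $M_K\gamma_{I,J}$ with $|I| = |J| = k$, $I\cup J\cup K = S$ — precisely the situation of Proposition \ref{Prop1} — so $\omega_k \in A_{k-1}\mu \subseteq A\mu$. For $k$ with $2k > s$, i.e. $k > s/2$, the hypothesis $\mathrm{Char}(F) > s/2$ lets me invoke Lemma \ref{Lemma2} (after checking $k \le s$; the degree of a monomial in $A$ never exceeds $s$ since $\xi_i^2 = 0$, and if $k > s$ then $A_k = 0$): $\mathcal{M}_k \subseteq A\mu$, so every element of $A_k$, being a linear combination of monomials, already lies in $A\mu$ with no annihilation hypothesis needed. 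Thus every homogeneous component of $\omega$ lies in $A\mu$, giving $\omega \in A\mu$.

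Having established $\mathrm{Ann}(\mathcal{G}_S) = A\mu$, I turn to the Frobenius-theoretic consequences. Since $A$ is a finite-dimensional symmetric algebra with nondegenerate associative form $B$, for any subset $X \subseteq A$ one has the standard duality $\mathrm{Ann}(X) = (AX)^{\perp}$ with respect to $B$, and consequently $\dim \mathrm{Ann}(X) + \dim(AX) = \dim A$; moreover $\mathrm{Ann}(\mathrm{Ann}(AX)) = AX$ for ideals. Applying this with $X = \mathcal{G}_S$: we get $\dim(A\mu) + \dim(A\mathcal{G}_S) = \dim A = 2^s$ directly from $\mathrm{Ann}(\mathcal{G}_S) = A\mu = (A\mathcal{G}_S)^\perp$. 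It remains to identify $A\mathcal{G}_S$ with $\mathrm{Ann}(A\mu)$: the inclusion $A\mathcal{G}_S \subseteq \mathrm{Ann}(A\mu)$ follows from Lemma \ref{Obvious Lemma} again (each $\eta \in \mathcal{G}_S$ satisfies $\mu\eta = 0$, hence $\alpha\eta$ annihilates $A\mu$); and since both sides are ideals of the same dimension — $\dim \mathrm{Ann}(A\mu) = \dim A - \dim(A\mu) = \dim(A\mathcal{G}_S)$ by the perpendicularity just used — the inclusion forces equality.

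The main obstacle is the reverse inclusion $\mathrm{Ann}(\mathcal{G}_S) \subseteq A\mu$, and within it the case analysis on the degree $k$ of a homogeneous component: the low-degree range $2k \le s$ is exactly where Proposition \ref{Prop1} does the work (itself the inductive heart of the section), while the high-degree range $k > s/2$ is where the characteristic hypothesis $\mathrm{Char}(F) > s/2$ becomes essential — without it Lemma \ref{Lemma2} fails and $\mathcal{M}_k$ need not be contained in $A\mu$. I should also be careful that the homogeneous components of elements of $\mathcal{G}_S$ are themselves (scalar multiples of) elements spanning the relevant subspaces, so that "annihilates all of $\mathcal{G}_S$" legitimately reduces to "annihilates each homogeneous generator of the required degree"; this is where I verify that the hypothesis of Proposition \ref{Prop1} is genuinely met. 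The Frobenius bookkeeping in the last paragraph is routine once the nondegenerate form is in hand.
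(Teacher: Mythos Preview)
Your proof is correct and follows essentially the same route as the paper: the inclusion $A\mu\subseteq\mathrm{Ann}(\mathcal{G}_S)$ via Lemma~\ref{Obvious Lemma}, the reverse inclusion by splitting a homogeneous $\omega\in A_k$ into the cases $k\le s/2$ (handled by Proposition~\ref{Prop1}) and $k>s/2$ (handled by Lemma~\ref{Lemma2}), and then the Frobenius duality of Lemma~\ref{Frobenius} to obtain $A\mathcal{G}_S=\mathrm{Ann}(A\mu)$ and the dimension formula. If anything, you are more explicit than the paper about the last step: the paper's proof passes directly to the stack-sortable description $\mathrm{Ann}(A\mu)=A\mathcal{P}_S$ without spelling out the perpendicularity argument $\dim(A\mu)+\dim(A\mathcal{G}_S)=\dim A$ that you give.
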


\begin{proof}
The inclusion $A\mu \subset Ann(\mathcal{G}_{S}\mathcal{)}$ is obvious. It
remains to prove $Ann(\mathcal{G}_{S}\mathcal{)\subset }A\mu .$ To this end
take any $\omega \in Ann(\mathcal{G}_{S}\mathcal{)}$ of degree $k$ and show
that $\omega \in A\mu .$\ If $k>\frac{s}{2}$ the assertion follows from
Lemma \ref{Lemma2}. In the case $k\leq \frac{s}{2}$, in turn, letting $%
t=s-2k $ we can write%
\begin{equation*}
\omega \xi _{k_{1}}\cdots \xi _{k_{t}}(\xi _{i_{1}}-\xi _{j_{1}})(\xi
_{i_{2}}-\xi _{j_{2}})\cdots (\xi _{i_{k}}-\xi _{j_{k}})=0\text{\ when }%
I\cup J\cup K=S
\end{equation*}%
and the result follows from Proposition \ref{Prop1}.

Finally, by \cite{GK} every element in $\mathcal{G}_{S}$ is a linear
combination of stack-sortable polynomials. Therefore%
\begin{equation*}
Ann(A\mu )=A\mathcal{P}_{S}.
\end{equation*}%
In order to exhibit a basis for $\mathcal{I}=Ann(A\mu ),$ we note first of
all that by the theorem we just proved this ideal is a graded ideal, say%
\begin{equation*}
\mathcal{I}=\mathcal{I}_{d}\oplus \cdots \oplus \mathcal{I}_{s}
\end{equation*}%
where $d$ is the integral part of $\dfrac{s+1}{2}.$ \ give explicitly a
basis for $\mathcal{I}_{m}$ and that $\dim (\mathcal{I})=\dbinom{s}{s-d}.$
\end{proof}

\section{An F-Basis for Annihilators of Principal Ideals}

In order to simplify the presentation, in addition to the notations given in
the introduction we also introduce the following notations:

\bigskip

$V=V_{1}\oplus \cdots \oplus V_{s}:$ an $F-$space of dimension $n$ with a
direct sum decomposition$\medskip $

$X_{k}=\{x_{k1},\ldots ,x_{kn_{k}}\}:$ a basis for $V_{k}\medskip $

$X=\bigcup\limits_{k=1}^{s}X_{k},$ a basis for $V\medskip $

$E=E(V):$ the exterior algebra on $V\medskip $

$\xi _{k}=x_{k1}\ldots x_{kn_{k}}\medskip $ \ and $n_{t}~$'s are all even

$\mu =\xi _{1}+\cdots +\xi _{s}\medskip $

$A=F[\xi _{1},\ldots ,\xi _{s}]$ \ as a subalgebra of $E\medskip $

$p_{k}:$ a product of elements of $X_{k}$ different from $1$ and $\xi
_{k}\medskip $

$P_{K}=$ The set of products of the form $p_{k_{1}\cdots }p_{k_{t}}$ \ for $%
K=\{k_{1},\ldots ,k_{t}\}\subset S\medskip $

$\mathcal{G}_{K}^{^{\prime }}=\mathcal{G}_{S-K}P_{K}\medskip $

$\mathcal{A}:=A\mathcal{G}_{K}^{^{\prime }}\medskip $

\bigskip

Now we can state our main result.$\medskip $

\begin{theorem}
The notation being as above, the annihilator of $\mu $ in $E$ is the ideal $%
\mathcal{A}$ generated by elements of the form%
\begin{equation*}
(\xi _{i_{1}}-\xi _{j_{1}})\cdots (\xi _{i_{r}}-\xi _{j_{r}})u_{k_{1}}\cdots
u_{k_{t}}\text{ }
\end{equation*}%
where $u_{k}\in \{x_{k1},\ldots ,x_{kn_{k}}\}$ and $\{i_{1},\ldots
,i_{r};j_{1},\ldots ,j_{r};k_{1},\ldots ,k_{t}\}=\{1,\ldots ,s\}$ when $%
Char(F)>\frac{s}{2}.$

Further the elements $(\xi _{i_{1}}-\xi _{j_{1}})\cdots (\xi _{i_{r}}-\xi
_{j_{r}})u_{k_{1}}\cdots u_{k_{t}}$ for which 
\begin{equation*}
(\xi _{i_{1}}-\xi _{j_{1}})\cdots (\xi _{i_{r}}-\xi _{j_{r}})\xi
_{k_{1}}\ldots \xi _{k_{t}}\in \{\theta (p^{\tau }(\overline{\xi };\overline{%
\eta }))~|~\tau \in St_{m}^{(2m-s)}\}
\end{equation*}%
(see \cite{KGE}, Section 3) form a minimal generating set.
\end{theorem}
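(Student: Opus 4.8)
The plan is to transport the description of $Ann_{A}(A\mu)$ obtained in Theorem~\ref{Theo 6} up to the exterior algebra $E$. One inclusion is immediate: for a generator $g=(\xi_{i_1}-\xi_{j_1})\cdots(\xi_{i_r}-\xi_{j_r})u_{k_1}\cdots u_{k_t}$ of $\mathcal{A}$, split $\mu=\sum_{m=1}^{r}(\xi_{i_m}+\xi_{j_m})+\sum_{l=1}^{t}\xi_{k_l}$; since each $\xi_\ell$ is even, hence central in $E$, and $\xi_\ell^2=0$, we get $(\xi_{i_m}+\xi_{j_m})(\xi_{i_m}-\xi_{j_m})=0$ for all $m$, while $\xi_{k_l}u_{k_l}=0$ because $u_{k_l}$ is a factor of $\xi_{k_l}$; thus $\mu g=0$, and as $Ann_E(\mu)$ is an ideal we obtain $\mathcal{A}\subseteq Ann_E(\mu)$.

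For the reverse inclusion I would decompose $E=\bigotimes_{k=1}^{s}E(V_k)$ using $E(V_k)=F\cdot 1\oplus F\cdot\xi_k\oplus W_k$, where $W_k$ is the span of the monomials $p_k$ (products of elements of $X_k$ different from $1$ and $\xi_k$). Expanding, $E$ is the direct sum, over subsets $L\subseteq S$ and tuples $(\bar p_\ell)_{\ell\notin L}$ of basis monomials of the $W_\ell$, of the subspaces $N(L,(\bar p_\ell))=\bigl(\prod_{\ell\notin L}\bar p_\ell\bigr)A_L$, with $A_L=F[\xi_k:k\in L]\subseteq E$; multiplication by $\prod_{\ell\notin L}\bar p_\ell$ is injective on $A_L$, and $\sum_L\bigl(\prod_{\ell\notin L}(2^{n_\ell}-2)\bigr)2^{|L|}=\prod_k 2^{n_k}=2^n=\dim E$. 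The crucial observation is that each $N(L,(\bar p_\ell))$ is stable under multiplication by $\mu$: for $\ell\notin L$ one has $\xi_\ell\bar p_\ell=0$ since $\bar p_\ell\neq 1$, so $\mu\cdot\bigl(\prod_{\ell\notin L}\bar p_\ell\bigr)a=\bigl(\prod_{\ell\notin L}\bar p_\ell\bigr)\mu_L a$ with $\mu_L=\sum_{k\in L}\xi_k$. Hence $Ann_E(\mu)=\bigoplus_{L,(\bar p_\ell)}\bigl(\prod_{\ell\notin L}\bar p_\ell\bigr)Ann_{A_L}(\mu_L)$, and since $Char(F)>s/2\geq|L|/2$, Theorem~\ref{Theo 6} applied inside the copy $A_L$ of $A$ gives $Ann_{A_L}(\mu_L)=Ann_{A_L}(A_L\mu_L)=A_L\mathcal{G}_L$. (The Frobenius property of $E$ provides a consistency check, forcing $\dim Ann_E(\mu)=2^n-\dim(E\mu)$.)

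It then remains to identify $\bigoplus_{L,(\bar p_\ell)}\bigl(\prod_{\ell\notin L}\bar p_\ell\bigr)A_L\mathcal{G}_L$ with $\mathcal{A}$. First one checks that $A_L\mathcal{G}_L$ is spanned by the elements $M_K\gamma_{I,J}$ with $I,J,K$ pairwise disjoint and $I\cup J\cup K=L$; the reductions $\xi_{i_m}(\xi_{i_m}-\xi_{j_m})=-\xi_{i_m}\xi_{j_m}$ and $\xi_\ell^2=0$ keep one inside this span. Then, for such an element, $\bigl(\prod_{\ell\notin L}\bar p_\ell\bigr)M_K\gamma_{I,J}$ is a left multiple $C\,g$ of the $\mathcal{A}$-generator $g=\gamma_{I,J}\prod_{k\in K}u_k\prod_{\ell\notin L}u_\ell$, where $u_k\in X_k$ is any factor of $\xi_k$, $u_\ell$ any factor of $\bar p_\ell$, and $C$ is the product of the complementary monomials: one rebuilds each $\xi_k$ from $u_k$ and each $\bar p_\ell$ from $u_\ell$, the complements sliding past the even central factors $\gamma_{I,J}$ and $\xi_k$ at the cost of signs. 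This gives $Ann_E(\mu)=\mathcal{A}=\sum_g E g$, and the form $\mathcal{A}=A\mathcal{G}_K'$ comes from the same manipulation. I expect the main obstacle for part (i) to be exactly this bookkeeping — keeping the disjointness conditions, the permutation signs between blocks, and the various reductions straight — rather than any conceptual difficulty, since the structural input is entirely Theorem~\ref{Theo 6}.

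For the minimality statement I would argue in two steps. Generation: by the result of \cite{GK} recalled in the proof of Theorem~\ref{Theo 6}, every element of $\mathcal{G}_S$ is a linear combination of elements of $\mathcal{P}_S$; lifting such a relation through the factors $u_k$ shows that already the sub-collection of those generators whose ``$\xi$-shadow'' $(\xi_{i_1}-\xi_{j_1})\cdots(\xi_{i_r}-\xi_{j_r})\xi_{k_1}\cdots\xi_{k_t}$ lies in $\{\theta(p^\tau(\overline{\xi};\overline{\eta}))\mid\tau\in St_m^{(2m-s)}\}$ generates $\mathcal{A}$. Minimality: $E$ is a local Artinian ring with maximal ideal $E_+$, so by Nakayama every generating set of $\mathcal{A}$ has at least $\dim_F(\mathcal{A}/E_+\mathcal{A})$ elements, and it suffices to show this dimension equals the number of the chosen generators. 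Using the description of $\mathcal{A}$ from part (i) together with the explicit bases of the graded pieces $\mathcal{I}_m$ of $Ann_A(A\mu)$ promised at the end of Theorem~\ref{Theo 6}, the quotient $\mathcal{A}/E_+\mathcal{A}$ can be computed, and the count reduces to the cardinalities $|St_m^{(2m-s)}|$ supplied by \cite{KGE}. Assembling this count — and verifying that the chosen generators really are independent modulo $E_+\mathcal{A}$, so that they form a minimal and not merely a generating system — is, I expect, the delicate point of part (ii).
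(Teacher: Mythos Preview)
Your argument for part (i) is correct and rests on the same block decomposition $E=\bigoplus_{L\subseteq S}\bigoplus_{p\in P_{S\setminus L}}p\cdot A_{L}$ that the paper uses, reducing to Theorem~\ref{Theo 6} on each block. Where you diverge is in the concluding step: the paper establishes $\mathcal{A}=Ann_{E}(\mu)$ by a dimension count, computing $\dim\mathcal{A}+\dim E\mu$ block by block via Theorem~\ref{Theo 6} (which gives $\dim Ann_{A_{L}}(\mu_{L})+\dim A_{L}\mu_{L}=2^{|L|}$), summing with the elementary-symmetric-polynomial identity $\sum_{k}2^{s-k}s_{k}(z_{1},\ldots,z_{s})=\prod_{\ell}(2+z_{\ell})$ at $z_{\ell}=2^{n_{\ell}}-2$, and then invoking the Frobenius property of $E$ to conclude. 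You instead observe directly that each block is stable under multiplication by $\mu$ (because $\xi_{\ell}\bar p_{\ell}=0$ for $\ell\notin L$), so $Ann_{E}(\mu)$ itself decomposes blockwise and Theorem~\ref{Theo 6} identifies each piece with the corresponding piece of $\mathcal{A}$; the identification of the result with the ideal generated by the stated elements is the same bookkeeping in both approaches. Your route is cleaner in that it makes the Frobenius property of $E$ and the symmetric-polynomial computation unnecessary; the paper's route has the side benefit of producing $\dim E\mu$ and $\dim\mathcal{A}$ explicitly. For part (ii) the paper, like you, gives only a sketch: it appeals to the linear independence of $\{\theta(p^{\tau}(\overline\xi;\overline\eta)):\tau\in St_{m}^{(2m-s)}\}$ established in \cite{KGE} rather than framing things via Nakayama, but supplies no more detail than your proposal does.
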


\begin{proof}
First we know that $E$ is a Frobenius algebra (see \cite{KE}). By the Lemma %
\ref{Frobenius} the same is true of $A=F[\xi _{1},\ldots ,\xi _{s}].$
Therefore 
\begin{equation*}
\dim (E)=\dim (E\mu )+\dim Ann_{E}(\mu )\text{ and }\dim (A)=\dim (A\mu
)+\dim Ann_{A}(\mu )
\end{equation*}%
Secondly,$\ \mathcal{A}\subset Ann_{E}(\mu )$ is obvious. Hence it is
sufficient to show that 
\begin{equation*}
\dim (\mathcal{A})=\dim (E)-\dim (E\mu ).
\end{equation*}%
To this end we make use of the direct sum decomposition%
\begin{equation*}
E=\bigoplus\limits_{1\leq l_{1}<\cdots <l_{k}\leq n}Ap_{l_{1}}\cdots
p_{l_{k}}
\end{equation*}%
where each $p_{k}$ is a product of the $x_{kj},$ factors of $\xi _{k}.$ Then
on one hand we have%
\begin{eqnarray*}
E\mu &=&\bigoplus\limits_{1\leq l_{1}<\cdots <l_{k}\leq s}A\mu
p_{l_{1}}\cdots p_{l_{k}} \\
&&\text{ \ } \\
&=&\bigoplus\limits_{1\leq l_{1}<\cdots <l_{k}\leq s}A(\mu -\xi _{l_{1}}-\xi
_{l_{2}}-\cdots -\xi _{l_{k}})p_{l_{1}}\cdots p_{l_{k}},
\end{eqnarray*}%
on the other hand $\mathcal{A}$ is spanned by elements of the form 
\begin{equation*}
(\xi _{i_{1}}-\xi _{j_{1}})\cdots (\xi _{i_{r}}-\xi _{j_{r}})\xi
_{k_{1}}\cdots \xi _{k_{t}}p_{l_{1}}\cdots p_{l_{m}}
\end{equation*}%
that is to say we have 
\begin{equation*}
\mathcal{A}=\bigoplus\limits_{1\leq l_{1}<\cdots <l_{k}\leq s}~\left(
\bigoplus\limits_{p_{l_{1}}\cdots p_{l_{k}}\in P_{L}}A\mathcal{G}%
_{S-L}p_{l_{1}}\cdots p_{l_{k}}\right)
\end{equation*}%
where we assume $p_{l}\neq 1$ and $\xi _{l}.$ Thus elements of $\mathcal{A}$
are linear combinations of \ the products 
\begin{equation*}
(\xi _{i_{1}}-\xi _{j_{1}})\cdots (\xi _{i_{m}}-\xi _{j_{m}})\xi
_{k_{1}}\cdots \xi _{k_{t}}p_{l_{1}}\cdots p_{l_{q}}\text{ }
\end{equation*}%
where the set of indices is equal to $S$ and $p_{l}\neq 1$ and $\xi _{l}.$
In turn, elements of $E\mu $ are linear combinations of%
\begin{equation*}
(\xi _{i_{1}}+\cdots \xi _{i_{m}})\xi _{k_{1}}\cdots \xi
_{k_{t}}p_{l_{1}}\cdots p_{l_{q}}
\end{equation*}%
with the set of indices equal to $S$ again.

Thus, 
\begin{eqnarray*}
\mathcal{A} &=&\bigoplus\limits_{1\leq l_{1}<\cdots <l_{k}\leq s}~\left(
\bigoplus\limits_{p_{l_{1}}\cdots p_{l_{k}}\in P_{L}}(A\mathcal{G}%
_{L^{^{\prime }}})p_{l_{1}}\cdots p_{l_{k}}\right) \\
&&\text{ \ } \\
&=&\bigoplus\limits_{1\leq l_{1}<\cdots <l_{k}\leq s}~\left(
\bigoplus\limits_{p_{l_{1}}\cdots p_{l_{k}}\in P_{L}}(A_{L^{\prime }}%
\mathcal{G}_{L^{^{\prime }}})p_{l_{1}}\cdots p_{l_{k}}\right) \\
&&\text{ \ \ } \\
&=&\bigoplus\limits_{1\leq l_{1}<\cdots <l_{k}\leq s}~\left(
\bigoplus\limits_{p_{l_{1}}\cdots p_{l_{k}}\in P_{L}}Ann_{A_{L^{\prime
}}}(A_{L^{\prime }}\mu _{L^{\prime }})p_{l_{1}}\cdots p_{l_{k}}\right) \text{
\ \ \ \ \ by Theorem \ref{Theo 6}} \\
&&\text{ \ \ \ } \\
&=&\bigoplus\limits_{L\subset S}Ann_{A_{L^{\prime }}}(A_{L^{\prime }}\mu
_{L^{\prime }})P_{L}
\end{eqnarray*}%
and%
\begin{eqnarray*}
E\mu &=&\bigoplus\limits_{1\leq l_{1}<\cdots <l_{k}\leq n}~\left(
\bigoplus\limits_{p_{l_{1}}\cdots p_{l_{k}}\in P_{L}}A\mu p_{l_{1}}\cdots
p_{l_{k}}\right) \\
&&\text{ \ \ } \\
&=&\bigoplus\limits_{1\leq l_{1}<\cdots <l_{k}\leq n}~\left(
\bigoplus\limits_{p_{l_{1}}\cdots p_{l_{k}}\in P_{L}}A_{L^{^{\prime }}}\mu
_{L^{^{\prime }}}p_{l_{1}}\cdots p_{l_{k}}\right) \\
&&\text{ \ \ } \\
&=&\bigoplus\limits_{L\subset S}(A_{L^{^{\prime }}}\mu _{L^{^{\prime
}}})P_{L}.
\end{eqnarray*}%
This yields%
\begin{eqnarray*}
\dim \mathcal{A} &=&\sum\limits_{L\subset S}\dim (Ann_{A_{L^{\prime
}}}(A_{L^{\prime }}\mu _{L^{\prime }}))|P_{L}|\ \ \ \ \ \ \text{and} \\
&&\text{ \ \ } \\
\dim E\mu &=&\sum\limits_{L\subset S}\dim (A_{L^{\prime }}\mu _{L^{\prime
}})|P_{L}|.
\end{eqnarray*}%
Now, on one hand we have by the previous section that 
\begin{equation*}
\dim Ann_{A_{L^{\prime }}}(A_{L^{\prime }}\mu _{L^{\prime }})+\dim
(A_{L^{\prime }}\mu _{L^{\prime }})=\dim A_{L^{\prime }}=2^{|L^{\prime }|}%
\text{.}
\end{equation*}%
On the other hand, we note that the polynomials 
\begin{equation*}
s_{k}=\sum\limits_{\substack{ L\subset S  \\ |L|=k}}z_{l_{1}}\cdots z_{l_{k}}
\end{equation*}%
are elementary symmetric polynomials in $z_{1},\ldots ,z_{s}$ and therefore 
\begin{equation*}
\sum\limits_{k=0}^{s}s_{k}z^{s-k}=(z+z_{1})\cdots (z+z_{s}).
\end{equation*}%
We conclude by letting $\ z_{l}:=2^{n_{l}}-2$ that 
\begin{eqnarray*}
\dim (E\mu )+\dim \mathcal{A} &=&\sum\limits_{L\subset S}2^{|L^{\prime
}|}(2^{n_{l_{1}}}-2)\cdots (2^{n_{l_{_{k}}}}-2) \\
&&\text{ \ } \\
&=&\sum\limits_{k=0}^{s}\sum\limits_{\substack{ L\subset S  \\ |L|=k}}%
2^{|L^{\prime }|}(2^{n_{l_{1}}}-2)\cdots (2^{n_{l_{_{k}}}}-2) \\
&&\text{ \ } \\
&=&\sum\limits_{k=0}^{s}2^{s-k}\sum\limits_{\substack{ L\subset S  \\ |L|=k}}%
z_{l_{1}}\cdots z_{l_{k}} \\
&&\text{ \ } \\
&=&\sum\limits_{k=0}^{s}2^{s-k}s_{k}=(2+z_{1})\cdots (2+z_{s}) \\
&&\text{ \ } \\
&=&2^{n_{1}+\cdots +n_{s}}=2^{n}\medskip
\end{eqnarray*}%
which means that $\mathcal{A}=Ann_{E}(\mu ).$

The last part of the theorem can be obtained by using techniques of \cite%
{KGE} and linearly independence of $\{\theta (p^{\tau }(\overline{\xi };%
\overline{\eta }))~|~\tau \in St_{m}^{(2m-s)}\}$ proved in \cite[Theorem 3]%
{KGE}$.$
\end{proof}

\bigskip

\end{document}